\numberwithin{equation}{section}
\newtheorem{lemma}{Lemma}[section]
\newtheorem{theorem}{Theorem}[section]
\title{Asymptotic height distribution in high-dimensional sandpiles}
\author{Antal A. J\'{a}rai \and Minwei Sun\thanks{Department of
Mathematical Sciences, University of Bath, Claverton Down, Bath BA2 7AY, United Kingdom \newline
Email: \tt{A.Jarai@bath.ac.uk}, \tt{M.Sun@bath.ac.uk}}} 
\date{25th October 2019}
\def\Pr{\mathbf{P}}
\def\E{\mathbf{E}}
\def\Z{\mathbb{Z}}
\def\Poisson{\mathsf{Poisson}}
\def\USF{\mathsf{USF}}
\def\UST{\mathsf{UST}}
\def\cT{\mathcal{T}}
\begin{document}
\maketitle

\begin{abstract}
We give an asymptotic formula for the single site height distribution
of Abelian sandpiles on $\Z^d$ as $d \to \infty$, in terms of 
$\Poisson(1)$ probabilities. We provide error estimates.
\end{abstract}

\section{Introduction}
\label{sec:introduction}

We consider the Abelian sandpile model on the nearest neighbour lattice $\Z^d$;
see Section \ref{sec:background} for definitions and background.
Let $\Pr$ denote the weak limit of the stationary distributions $\Pr_L$
in finite boxes $[-L,L]^d \cap \Z^d$.
Let $\eta$ denote a sample configuration from the measure $\Pr$.
Let $p_d(i) = \Pr[\eta(o) = i]$, $i = 0, \dots, 2d-1$, denote the 
height probabilities at the origin in $d$ dimensions.
The following theorem is our main result that states the asymptotic form of 
these probabilities as $d \to \infty$. 

\begin{theorem}
\label{thm:asymformula}
(i) For $0 \le i \le d^{1/2}$, we have 
\begin{equation}
\label{e:formula}
  p_d(i) 
  = \sum_{j=0}^i \frac{e^{-1} \frac{1}{j!}}{2d-j} + O\Big(\frac{i}{d^2}\Big)
  = \frac{1}{2d} \sum_{j=0}^i e^{-1} \frac{1}{j!} + O\Big(\frac{i}{d^2}\Big).
\end{equation}
(ii) If $d^{1/2} < i \le 2d-1$, we have
\begin{equation*}
  p_d(i)
  = p_d(d^{1/2}) + O(d^{-3/2}).
\end{equation*}
In particular, $p_d(i) \sim (2d)^{-1}$, if $i,d \to \infty$.
\end{theorem}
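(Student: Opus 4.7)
The plan is to combine two ingredients: a Majumdar--Dhar burning-bijection identity expressing $p_d(i)$ as a weighted sum of USF-probabilities, and a Poisson approximation for the relevant USF-local statistic in high dimensions. The first ingredient should give a formula of the shape
\[
  p_d(i) \;=\; \sum_{j=0}^{i} \frac{q_d(j)}{2d-j},
\]
where $q_d(j) = \Pr[N_d = j]$ for a non-negative integer random variable $N_d$ defined via the wired uniform spanning forest on $\Z^d$ --- concretely, $N_d$ is the number of neighbours $v$ of $o$ whose directed USF-edge points $v \to o$ (so that $v$ is a ``child'' of $o$ when the forest is oriented towards infinity). Conditional on $\{N_d = j\}$, the height $\eta(o)$ is uniform on $\{j, j+1, \dots, 2d-1\}$, which accounts for the factor $\tfrac{1}{2d-j}$. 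This representation reduces the theorem to estimating the law of $N_d$ as $d \to \infty$.

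For part (i), the key step is a Poisson$(1)$ approximation $q_d(j) = e^{-1}/j! + O(1/d)$, uniform over $j \le d^{1/2}$. The marginal probability that a fixed directed edge $v \to o$ lies in the USF equals $\tfrac{1}{2d}(1 + O(1/d))$ via Wilson's algorithm combined with sharp SRW escape-probability asymptotics on $\Z^d$, while a mass-transport computation gives $\E[N_d] = 1$ exactly. The $2d$ Bernoulli indicators $\mathbf{1}[v \to o \in \USF]$ over the neighbours $v$ of $o$ have pairwise correlations of order $O(1/d^3)$ --- by a coupling of two runs of Wilson's algorithm, the dominant contribution coming from loop-erased walks that pass through both $v$ and $v'$ --- so a Chen--Stein total-variation bound of Barbour--Holst--Janson type delivers the claimed approximation. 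Inserting this into the identity above and noting that $\tfrac{1}{2d-j} = O(1/d)$ uniformly for $j \le d^{1/2}$ gives a per-term error of $O(1/d^2)$; summing over $i+1$ values of $j$ produces the stated $O(i/d^2)$ global error. The second form of~(\ref{e:formula}) then follows from the Taylor expansion $\tfrac{1}{2d-j} = \tfrac{1}{2d} + \tfrac{j}{2d(2d-j)}$ and the uniform boundedness of $\sum_{j \ge 0} j\,e^{-1}/j!$.

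For part (ii), the factorial decay of the Poisson weights makes $\sum_{j > d^{1/2}} q_d(j)$ super-polynomially small in $d$. Since $\tfrac{1}{2d-j} \le 1$ trivially, the additional contribution
\[
  p_d(i) - p_d(\lfloor d^{1/2} \rfloor)
  \;=\; \sum_{j = \lfloor d^{1/2} \rfloor + 1}^{i} \frac{q_d(j)}{2d-j}
\]
is negligible, so the $O(d^{-3/2})$ error in (ii) is actually dictated by the error in (i) evaluated at $i = \lfloor d^{1/2} \rfloor$. The statement $p_d(i) \sim (2d)^{-1}$ as $i, d \to \infty$ then follows from the closed form in (i) together with $\sum_{j=0}^{\infty} e^{-1}/j! = 1$.

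The main obstacle will be making the Poisson approximation for $N_d$ effective, with explicit control on both the single-edge marginal and the pairwise correlations. Obtaining $\Pr[v \to o \in \USF] = \tfrac{1}{2d} + O(1/d^2)$ requires sharp escape-probability asymptotics for SRW on $\Z^d$ with quantitative $d$-dependence, and the pairwise correlation bound requires a careful two-walk coupling inside Wilson's algorithm. The sandpile combinatorics is the easy part; the real technical weight of the proof lies in the high-dimensional random walk estimates.
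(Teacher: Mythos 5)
Your overall skeleton agrees with the paper's: both start from the exact identity $p_d(i)=\sum_{j=0}^{i}q_d(j)/(2d-j)$ and reduce everything to a $\Poisson(1)$ approximation for $q_d$, and your two random-walk inputs (single-edge marginal $\tfrac{1}{2d}+O(d^{-2})$, pairwise correlations $O(d^{-3})$) are exactly what the paper extracts from high-dimensional return-probability estimates. The route through Chen--Stein is a genuine alternative to the paper's hands-on construction of matching upper and lower bounds for $q_d(i)$ via Wilson's algorithm, but it is not complete as stated. First, a Barbour--Holst--Janson bound that uses only pairwise correlations requires the indicator family to be negatively (or positively) related; for undirected $\USF$ edges this would follow from the determinantal structure, but your $N_d$ counts \emph{directed} edges $v\to o$ (orientation towards infinity), and --- more importantly --- the quantity entering the exact bijection identity is the number of neighbours $w$ with $o\in\pi(w)$, i.e.\ descendants of $o$ among its neighbours, not children. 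Neither of these indicator families is obviously negatively related, so the Chen--Stein step needs a justification you have not supplied (the paper sidesteps this entirely by bounding $\Pr[E_i(x_1,\dots,x_i)]$ from above and below directly). The mismatch between ``children'' and ``descendants among neighbours'' costs only an event of probability $O(1/d)$ (the paper's Lemma \ref{lem:barAall}), so it is harmless at the precision of part (i), but the exact identity holds for the latter quantity and you should work with it.

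The genuine gap is in part (ii). From a total-variation Poisson approximation with error $O(1/d)$ you can only conclude $\sum_{j>d^{1/2}}q_d(j)\le \Pr[\Poisson(1)>d^{1/2}]+O(1/d)=O(1/d)$; the ``super-polynomial'' factorial decay belongs to the limiting Poisson law, not to $q_d$ itself, and the $O(1/d)$ approximation error swamps it. Since $1/(2d-j)$ can be as large as $1$ for $j$ near $2d-1$, your bound on $p_d(i)-p_d(\lfloor d^{1/2}\rfloor)$ comes out as $O(1/d)$, which does not give the claimed $O(d^{-3/2})$. You need a separate tail estimate for the actual random variable: the paper proves $\E[X^3]=O(1)$ for $X=|\{w\sim o: o\in\pi(w)\}|$ (by summing the $k$-point estimates of Lemma \ref{lem:pio} over triples of neighbours, $k\le 3$) and applies Markov's inequality, $\Pr[X\ge d^{1/2}]\le \E[X^3]\,d^{-3/2}$. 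Some such input beyond the total-variation bound --- a moment bound or a genuine concentration inequality for $X$ --- is indispensable for part (ii).
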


The appearance of the $\Poisson(1)$ distribution in the above formula
is closely related to the result of Aldous \cite{Aldous1990}
that the degree distribution of the origin in the uniform spanning forest
in $\Z^d$ tends to $1$ plus a $\Poisson(1)$ random variable as $d \to \infty$.
Indeed our proof of \eqref{e:formula} is achieved by showing that in the 
uniform spanning forest of $\Z^d$, the number of neighbours $w$ of the origin $o$,
such that the unique path from $w$ to infinity passes through $o$ is
asymptotically the same as the degree of $o$ minus $1$, that is, $\Poisson(1)$.

In \cite{JaraiSun2019a} we compared the formula \eqref{e:formula} to numerical 
simulations in $d = 32$ on a finite box with $L = 128$, and there is excellent
agreement with the asymptotics already for these values.

Other graphs where information on the height distribution is available are as follows.
Dhar and Majumdar \cite{DharMajumdar1990} studied the Abelian sandpile model 
on the Bethe lattice and the exact expressions for various distribution functions 
including the height distribution at a vertex were obtained using combinatorial methods.
For the single site height distribution they obtained (see \cite[Eqn.~(8.2)]{DharMajumdar1990})
\begin{equation*}
 p_{\mathrm{Bethe},d}(i)
 = \frac{1}{(d^2 - 1) \, d^d} \sum_{j = 0}^i \binom{d+1}{j} (d-1)^{d-j+1}. 
\end{equation*}
If one lets the degree $d \to \infty$ in this formula, one obtains the form
in the right hand side of \eqref{e:formula} for any fixed $i$ (with $2d$ replaced by $d$). 

Exact expressions for the distribution of height probabilities were derived 
by Papoyan and Shcherbakov \cite{PapoyanShcherbakov1996} on the Husimi lattice of 
triangles with an arbitrary coordination number $q$.
However, on $d$-dimensional cubic lattices of $d\geq 2$, exact results for 
the height probability are only known for $d = 2$; see
\cite{MajumdarDhar1991}, \cite{Priezzhev1994}, \cite{JengPirouxRuelle2006},
\cite{KenyonWilson2015}, \cite{PoghosyanPriezzhevRuelle2011}.

\subsection{Definitions and background}
\label{sec:background}
Sandpiles are a lattice model of self-organized criticality, introduced by 
Bak, Tang and Wiesenfeld \cite{BakTangWiesenfeld1987}, and have been studied 
in both physics and mathematics. See the surveys 
\cite{Jarai2018}, \cite{LevinePeres2017},
\cite{Redig2006}, \cite{HolroydLevineMeszarosPeresProppWilson2008}, 
\cite{Dhar2006}.
Although the model can easily be defined on an arbitrary finite connected graph,
in this paper we will restrict to subsets of $\Z^d$.

Let $V_L = [-L,L]^d \cap \Z^d$ be a box of radius $L$, where $L \ge 1$.
For simplicity, we suppress the $d$-dependence in our notation.
We let $G_L = (V_L \cup \{s \},E_L)$ denote the graph obtained from $\Z^d$
by identifying all vertices in $\Z^d \setminus V_L$ that becomes $s$,
and removing loop-edges at $s$. We call $s$ the \emph{sink}.
A \emph{sandpile} $\eta$ is a collection of indistinguishable particles
on $V_L$, specified by a map $\eta : V_L \to \{ 0, 1, 2, \dots \}$. 

We say that $\eta$ is stable at $x\in V_L$, if $\eta (x) < 2d$.
We say that $\eta$ is stable, if $\eta (x) < 2d$, for all $x\in V_L$.
If $\eta$ is unstable (i.e. $\eta (x) \geq 2d$ for some $x\in V_L$),
$x$ is allowed to topple which means that $x$ passes one particle 
along each edge to its neighbours. When the vertex $x$ topples, 
the particles are re-distributed as follows:
\begin{equation*}
\begin{split}
&\eta (x) \rightarrow \eta(x) - 2d; \\
&\eta (y) \rightarrow \eta(y) + 1, \quad y \in V_L, y \sim x.
\end{split}
\end{equation*}

Particles arriving at $s$ are lost, so we do not keep track of them. 
Toppling a vertex may generate further unstable vertices. 
Given a sandpile $\xi$ on $V_L$, we define its stabilization
\begin{equation*}
  \xi^\circ \in \Omega_L
  := \{\textsl{all stable sandpiles on $V_L$}\} 
  = \{0,1,\dots,2d-1\}^{V_L}
\end{equation*}
by carrying out all possible topplings, in any order, 
until a stable sandpile is reached.
It was shown by Dhar \cite{Dhar1990} that
the map $\xi \mapsto \xi^\circ$ is well-defined, that is,
the order of topplings does not matter.

We now define the sandpile Markov chain.
The state space is the set of stable sandpiles $\Omega_L$. 
Fix a positive probability distribution $p$ on $V_L$, 
i.e. $\sum_{x\in V_L} p(x) = 1 $ and $p(x) > 0$ for all $x\in V_L$.
Given the current state $\eta \in \Omega_L$, 
choose a random vertex $X \in V$ according to $p$, 
add one particle at $X$ and stabilize. 
The one step transition of the Markov chain moves from
$\eta$ to $(\eta + \bf{1}_X)^{\circ}$.
Considering the sandpile Markov chain on $G_L$,
there is only one recurrent class \cite{Dhar1990}. 
We denote the set of recurrent sandpiles by $\mathcal{R}_L$.
It is known \cite{Dhar1990} that the invariant distribution 
$\Pr_{L}$ of the Markov chain is uniformly distributed on $\mathcal{R}_L$.

Majumdar and Dhar \cite{MajumdarDhar1992} gave a bijection
between $\mathcal{R}_L$ and spanning trees of $G_L$. This maps
the uniform measure $\Pr_L$ on $\mathcal{R}_L$ to the uniform
spanning tree measure $\UST_L$. A variant of this bijection 
was introduced by Priezzhev \cite{Priezzhev1994}, 
and is described in more generality in 
\cite{JaraiWerning2014}, \cite{GamlinJarai2014}. The latter bijection 
enjoys the following property, that we will exploit in this paper.
Orient the spanning tree towards $s$, and let $\pi_L(x)$ denote
the oriented path from a vertex $x$ to $s$. Let
\begin{equation*}
 W_L
 = \{ x \in V_L : o \in \pi_L(x) \}. 
\end{equation*}
Then we have that 
\begin{equation}
\label{e:bijection-rel}
  \parbox{10cm}{conditional on $\deg_{W_L}(o) = i$, the height 
  $\eta(o)$ is uniformly distributed over the values
  $i, i+1, \dots, 2d-1$.} 
\end{equation}
This has the following consequence for the height probabilities.
Let $q^L(i) = \UST_L [ \deg_{W_L}(o) = i ]$, $i = 0, \dots, 2d-1$.
Then 
\begin{equation*}
 p^L(i)
 := \Pr_L [ \eta(o) = i ] 
 = \sum_{j=0}^{i} \frac{q^L(j)}{2d - j}. 
\end{equation*}

The measures $\Pr_L$ have a weak limit $\Pr = \lim_{L \to \infty} \Pr_L$
\cite{AthreyaJarai2004}, and hence $p(i) = \lim_{L \to \infty} p^L(i)$ exist,
$i = 0, \dots, 2d-1$. Although the $q^L(i)$ depend on the non-local
variable $W_L$, one also has that $q(i) = \lim_{L \to \infty} q^L(i)$ exist,
$i = 0, \dots, 2d-1$; see \cite{JaraiWerning2014}. In fact, $q(i)$ is given by the
following natural analogue of its finite volume definition. Consider the 
uniform spanning forest measure $\USF$ on $\Z^d$; defined as the weak limit
of $\UST_L$; see \cite[Chapter 10]{LyonsPeres2016}.
Let $\pi(x)$ denote the unique infinite self-avoiding path in the spanning forest
starting at $x$, and let 
\begin{equation*}
 W
 = \{ x \in \Z^d : o \in \pi(x) \}. 
\end{equation*}
Then $q(i) = \USF [ \deg_{W}(o) = i ]$, $i = 0, \dots, 2d-1$.

Therefore, we have 
\begin{equation}
 p(i)
 := \Pr [ \eta(o) = i ] 
 = \sum_{j=0}^{i} \frac{q(j)}{2d - j}. 
\end{equation}

\subsection{Wilson's method} 

Given a finite path $\gamma = [s_0, s_1, ... , s_k] $ in $\Z^d$, we erase loops from $\gamma$ 
chronologically, as they are created. We trace $\gamma$ until the first time $t$, 
if any, when $s_t \in \{s_0, s_1, ..., s_{t-1}\}$, i.e. there is a loop. 
We suppose $s_t = s_i$, for some $i \in \{0,1,...,t-1\}$ and remove the loop 
$[s_i,s_{i+1},...,s_t=s_i]$. Then we continue tracing $\gamma$ and follow the same procedure 
to remove loops until there are no more loops to remove. 
This gives the loop-erasure $\pi = LE(\gamma)$ of $\gamma$, 
which is a self-avoiding path \cite{MadrasSlade2013}.
If $\gamma$ is generated from a random walk process, the loop-erasure of $\gamma$ 
is call the loop-erased random walk (LERW).

When $d \ge 3$, the $\USF$ on $\Z^d$ can be sampled via Wilson's method rooted at
infinity \cite{BenjaminiLyonsPeresSchramm2001}, \cite[Section 10]{LyonsPeres2016}, that is
described as follows. 
Let $s_1, s_2,\dots$ be an arbitrary enumeration of the vertices and let 
$\cT_0$ be the empty forest with no vertices. 
We start a simple random walk $\gamma_n$ at $s_n$ and $\gamma_n$ stops when $\cT_{n-1}$ is hit,
otherwise we let it run indefinitely. 
$LE(\gamma_n)$ is attached to $\cT_{n-1}$ and the resulting forest is denoted by $\cT_n$.
We continue the same procedure until all the vertices are visited. 
The above gives a random sequence of forests $\cT_1 \subset \cT_2 \subset \dots$, 
where $\cT = \cup_{n} \cT_n $ is a spanning forest of $\Z^d$. 
The extension of Wilson's theorem \cite{Wilson1996} to transient infinite
graphs proved in \cite{BenjaminiLyonsPeresSchramm2001} implies that $\cT$ is distributed
as the $\USF$.

\section{Proof of the main theorem}

Let $(S_n^x)_{n\geq 0}$ be a simple random walk started at $x$ (independent 
between $x$'s on $\Z^d$) and let $\pi(x)$ be the path in the USF
from $x$ to infinity. 
We introduce the events:
\begin{equation*}
\begin{split}
E_i &= \Big\{ |\{ w \sim o : \text{$\pi(w)$ passes through $o$}\}| = i \Big\}, \quad
    i = 0, \dots, 2d-1;\\
E_i(x_1,x_2,\dots,x_i) &= \Big \{ \{ w \sim o : \text{$\pi(w)$ passes through $o$}\} 
= \{x_1, x_2,\dots,x_i\} \Big\}.
\end{split}
\end{equation*}
Then recall that
\begin{equation}
\label{e:qdEi}
q_d(i) = \Pr[\deg_W(o) = i]
= \Pr[E_i] = \sum_{\substack{x_1,\dots,x_i 
\sim o \\ \text{distinct}}} \Pr[E_i(x_1,\dots,x_i)].
\end{equation}

\subsection{Preliminary}

\begin{lemma}
\label{lem:returntoo}
We have $\Pr[\text{$S^o_n = o$ for some $n \ge 2$}]
   = O(1/d)$
and $\Pr[\text{$S^o_n = o$ for some $n \ge 4$}]
   = O(1/d^2)$,
as $d \to\infty$.
\end{lemma}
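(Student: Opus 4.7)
The plan is to reduce both statements to a single asymptotic for the Green's function $G(o,o) := \sum_{n \ge 0} p_n(o, o)$ of simple random walk on $\Z^d$, where $p_n(o, o) := \Pr[S^o_n = o]$. By the union bound,
\[
\Pr[S^o_n = o \text{ for some } n \ge m] \le \sum_{n \ge m} p_n(o, o),
\]
and since simple random walk on $\Z^d$ is bipartite (so $p_n(o,o) = 0$ for odd $n$), both conclusions follow once I establish
\[
G(o, o) = 1 + \frac{1}{2d} + O(d^{-2}) \quad \text{as } d \to \infty.
\]
Indeed, the first assertion comes from $\sum_{n \ge 2} p_n(o,o) = G(o,o) - 1 = O(d^{-1})$, and the second from $\sum_{n \ge 4} p_n(o,o) = G(o,o) - 1 - p_2(o, o) = O(d^{-2})$, using $p_2(o,o) = 1/(2d)$.

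To establish the asymptotic for $G(o,o)$, first compute $p_2(o, o) = 1/(2d)$ directly: after one step the walker is at a uniformly chosen neighbour, and returns to $o$ at the next step with probability $1/(2d)$. What remains is the tail bound $\sum_{k \ge 2} p_{2k}(o, o) = O(d^{-2})$. The starting point is the exact combinatorial formula
\[
p_{2k}(o, o) = \frac{(2k)!}{(2d)^{2k}} \sum_{\substack{k_1, \ldots, k_d \ge 0 \\ k_1 + \cdots + k_d = k}} \prod_{i=1}^d \frac{1}{(k_i!)^2},
\]
obtained by classifying closed walks of length $2k$ in $\Z^d$ by the number $k_i$ of steps taken in each axis direction. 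For $k$ fixed and $d \to \infty$ the sum is dominated by multi-indices with every $k_i \in \{0, 1\}$, of which there are $\binom{d}{k}$, each contributing $1$; this yields $p_{2k}(o, o) = O(d^{-k})$ for each fixed $k$, and summing over $k = 2, \ldots, K$ gives $O(d^{-2})$ for any fixed cutoff $K$. For the remaining tail $k > K$, where this combinatorial bound is too crude, I would switch to the Fourier representation $p_{2k}(o, o) = \int_{[-\pi, \pi]^d} \phi(\theta)^{2k} \, d\theta/(2\pi)^d$ with $\phi(\theta) = d^{-1} \sum_i \cos \theta_i \in [-1, 1]$, and use that $|\phi|$ is bounded away from $1$ outside small neighbourhoods of $0$ and $\pm \pi \mathbf{1}$, to show this tail is negligible.

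The main obstacle is fitting together the small-$k$ regime (where the combinatorial bound gives $O(d^{-k})$ but with $k$-dependent constants that grow in $k$) and the large-$k$ regime (controlled by Fourier or local-central-limit arguments), so that their combined contribution is genuinely $O(d^{-2})$ with constants uniform in $d$, rather than merely $o(d^{-1})$. Once the master asymptotic for $G(o,o)$ is in hand, both parts of the lemma follow at once.
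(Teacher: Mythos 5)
Your overall strategy --- union bound over return times, parity to discard odd $n$, the exact value $p_2(o,o)=1/(2d)$, combinatorial counting for small $n$ and Fourier analysis for large $n$ --- is the same as the paper's, but the step you yourself flag as ``the main obstacle'' is precisely the content of the proof, and you have not supplied it. Two specific problems. First, your combinatorial bound $p_{2k}(o,o)=O(d^{-k})$ carries a constant of order $(2k)!/(k!\,4^k)$, which grows superexponentially in $k$; summing it over all $k\ge 2$ is hopeless, and restricting to $k\le K$ for a fixed cutoff $K$ leaves the entire range $K<k\lesssim d$ uncovered. Second, your plan for large $k$ --- that $|\phi|$ is bounded away from $1$ outside small neighbourhoods of $0$ and $\pm\pi\mathbf{1}$ --- does not by itself give a bound uniform in $d$: near the origin $\phi(\theta)=1-|\theta|^2/(2d)+\cdots$, so the neighbourhood on which $|\phi|>1-\epsilon$ has $d$-dependent size, and the quality of the ``bounded away from $1$'' estimate deteriorates as $d\to\infty$. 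A quantitative replacement is needed.

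The paper closes both gaps with two inputs absent from your sketch. For the intermediate range $6\le n\le d-1$ it uses monotonicity: since $\hat{D}(k)^2\le 1$, one has $\int\hat{D}^{2k}\,dk\le\int\hat{D}^{6}\,dk$ for all $k\ge 3$, and $(2\pi)^{-d}\int\hat{D}^{6}\,dk=p_6(o,o)=O(d^{-3})$ by exactly your combinatorial count at $k=3$; summing this uniform $O(d^{-3})$ bound over the at most $d$ values of $n$ in the range gives $O(d^{-2})$, with no $k$-dependent constants to control. For $n\ge d$ it invokes the $\ell^1$ bound $\|\hat{D}^n\|_1\le(\pi d/(4n))^{d/2}$ (Madras--Slade, Lemma A.3), whose sum over $n\ge d$ is $O(e^{-cd})$. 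If you substitute the monotonicity trick for your fixed cutoff $K$, and this quantitative tail bound for the qualitative ``bounded away from $1$'' argument, your proposal becomes the paper's proof; as written, the decisive uniformity issue is acknowledged but not resolved.
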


\begin{proof}
Let $\hat{D}(k)$ = $ \frac{1}{d} \sum_{j=1}^{d} \cos(k_j)$, $ k \in [-\pi,\pi]^d$,
be the Fourier transform in $d$ dimensions of the one-step distribution of RW. 
Lemma A.3 in \cite{MadrasSlade2013} states that for all non-negative integers $n$ 
and all $d\geq 1$ we have
\begin{equation*}
\|\hat{D}^n\|_1
= (2\pi)^{-d}\int_{[-\pi,\pi]^d}|\hat{D}(k)^n|d^dk  
\leq (\frac{\pi d}{4n})^{d/2}.
\end{equation*}
Based on above, we have
\begin{equation}
\label{e:rw-terms}
\begin{split}
\Pr[\text{$S^o_n = o$ for some $n \ge 4$}]
& \leq \frac{1}{(2\pi)^d} \sum_{n=4}^{\infty} \int \hat{D}^n(k)dk \\ 
& \leq \frac{1}{(2\pi)^d} \sum_{n=4}^{d-1} \int \hat{D}^n(k)dk
 + \sum_{n=d}^{\infty} \Big(\frac{\pi d}{4n}\Big)^{d/2}.
\end{split}
\end{equation}

Since $\int \hat{D}^4(k) dk$ and $\int \hat{D}^6(k)dk$ state the probability 
that $S^o$ returns to $o$ in $4$ and $6$ steps each, 
by counting the number of ways to return, they are bounded 
by dimension-independent multiples of $1/d^2$ and $1/d^3$ respectively.
We have $\int \hat{D}^n(k) dk = 0$ with odd $n$, 
and for $6 < n \leq d-1$ and $n$ even, we have 
$\int \hat{D}^n(k)dk \leq \int \hat{D}^6(k)dk$.
Hence, 
\begin{equation*}
\frac{1}{(2\pi)^d} \int \hat{D}^n(k)dk = O\Big(\frac{1}{d^3}\Big), 
  \quad 6 \le n \le d-1. 
\end{equation*}

The last sum in \eqref{e:rw-terms} can be bounded as: 
\begin{equation*}
\begin{split}
\Big(\frac{\pi d}{4}\Big)^{d/2} \sum_{n=d}^{\infty} n^{-d/2}
& \leq\Big(\frac{\pi d}{4}\Big)^{d/2} \int_{d-1}^\infty x^{-d/2} dx 
 = \Big(\frac{\pi d}{4}\Big)^{d/2} \frac{(d-1)^{1-\frac{d}{2}}}{d/2-1}\\
&= \Big(\frac{d-1}{d/2-1}\Big)\Big(\frac{d}{d-1}\Big)^{\frac{d}{2}}
 \Big(\frac{\pi}{4}\Big)^{\frac{d}{2}}
\leq Ce^{-cd},
\end{split}
\end{equation*}
since we can take $d > 4$ and $\frac{\pi}{4} < 1$.

Hence, we have the required results
\begin{equation*}
\begin{split}
\Pr[\text{$S^o_n = o$ for some $n \ge 4$}]
&\leq \int \hat{D}^4(k) dk + d \int \hat{D}^6(k)dk + C e^{-cd}\\
&= O\Big(\frac{1}{d^2}\Big) + d\times O\Big(\frac{1}{d^3}\Big)
= O\Big(\frac{1}{d^2}\Big),\\
\Pr[\text{$S^o_n = o$ for some $n \ge 2$}] 
&\leq \Big(\frac{1}{2d}\Big) + \Pr[\text{$S^o_n = o$ for some $n \ge 4$}] 
= O\Big(\frac{1}{d}\Big).
\end{split}
\end{equation*}

\end{proof}

\subsection{Lower bounds}

Let us fix the vertices $x_1, \dots, x_i \sim o$.
Let 
\begin{equation*}
 A_0 = \Big\{ S_1^o \not\in \{  x_1, \dots, x_i \},\, S_n^o \not\in \mathcal{N}
       \text{ for $n \geq 2 $} \Big\},
\end{equation*} 
where $\mathcal{N} = \{ y \in \Z^d : |y| \leq 1\}$.

\begin{lemma}
We have $\Pr[A_0] \geq 1- O(i/d).$
\end{lemma}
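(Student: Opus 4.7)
The plan is to union-bound the complement:
\[
  A_0^c \subseteq \{ S_1^o \in \{x_1, \dots, x_i\} \}
    \cup \{ \exists\, n \geq 2 : S_n^o \in \mathcal{N} \}.
\]
The first event has probability $i/(2d)$, so it suffices to show that $\Pr[\exists\, n \geq 2 : S_n^o \in \mathcal{N}] = O(1/d)$ uniformly in $i$. Applying the Markov property at time $1$ and using the invariance of $\mathcal{N}$ under the symmetries of $\Z^d$ permuting the neighbours of $o$, this probability equals $\Pr[\exists\, n \geq 1 : S_n^{e_1} \in \mathcal{N}]$ for a fixed neighbour $e_1$ of $o$.

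Next I would decompose according to which point of $\mathcal{N}$ is hit. Returns to $e_1$ itself contribute $\Pr[S_n^o = o \text{ for some } n \geq 2] = O(1/d)$ by translation invariance and Lemma~\ref{lem:returntoo}. For each of the $2d$ targets $y \in \mathcal{N} \setminus \{e_1\}$, the standard identity $\Pr[\text{walk from } x \text{ hits } y] = G(x, y)/G(y, y)$ combined with translation invariance gives the upper bound $G(o, y - e_1)/G(o, o)$, where $G(o, x) = \sum_{n \geq 0} \Pr[S_n^o = x]$. Lemma~\ref{lem:returntoo} yields $G(o, o) = 1 + O(1/d)$, and the first-step identity $G(o, o) = 1 + G(o, e_1)$ then gives $G(o, e_1) = O(1/d)$, handling the target $y = o$. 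For each of the remaining $2d - 1$ targets, $y - e_1$ lies at graph distance $2$ from $o$ (namely $-2e_1$, or $\pm e_j - e_1$ for $j \neq 1$); a direct enumeration of $2$-step walks gives $\Pr[S_2^o = y - e_1] = O(1/d^2)$, while the tail $\sum_{n \geq 4} \Pr[S_n^o = y - e_1]$ is controlled via the Fourier inequality $\Pr[S_n^o = x] \leq \Pr[S_n^o = o]$ for even $n$ (valid since $\hat{D}^n \geq 0$ then), together with the $O(1/d^2)$ estimate from Lemma~\ref{lem:returntoo}. Summing over targets yields $(2d - 1) \cdot O(1/d^2) = O(1/d)$, as required.

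The main obstacle is to avoid the naive bound $\Pr[\exists\, n \geq 2 : S_n^o \in \mathcal{N}] \leq \sum_{n \geq 2} \Pr[S_n^o \in \mathcal{N}]$, which only yields $O(1)$, since the expected number of visits to $\mathcal{N}$ at times $n \geq 2$ is of order $1$ rather than $1/d$. Conditioning on $S_1$ is essential: it converts each per-target contribution into a Green's function at a point at graph distance at least $2$ from $o$, where the dominant $n = 2$ term is only $O(1/d^2)$, leaving ample room for the factor $2d$ coming from the number of targets.
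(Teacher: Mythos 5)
Your proposal is correct and follows essentially the same route as the paper: a union bound over the $O(d)$ targets in $\mathcal{N}$, where each target at graph distance at least $2$ from the walk's position after its first step contributes $O(1/d^2)$ via Lemma \ref{lem:returntoo} and the inequality $P_n(o,x)\le P_n(o,o)$ for even $n$, while the $O(1)$ nearby targets contribute $O(1/d)$ each. The only difference is organizational: you apply the Markov property at time $1$ and invoke the identity $\Pr[\text{hit } y] = G(x,y)/G(y,y)$, whereas the paper steps forward to time $3$ and sums transition probabilities directly; both yield the same $i/(2d)+O(1/d)$ bound.
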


\begin{proof}

\begin{equation*}
\Pr[A_0] 
 = \Pr[S_1^o \neq x_1,\dots,x_i]
   \Pr[S_n^o \not\in \mathcal{N}\text{ for $n \geq 2 $} | S_1^o \neq x_1,\dots,x_i].
\end{equation*} 
We have $\Pr[S_1^o \neq x_1, \dots x_i] = 1-O(i/d)$ and the probability for 
the remaining steps is at least $1-O(1/d)$, shown as follows.
The probabilities $\Pr[S_2^o \neq o | S_1^o \neq x_1,\dots,x_i]$ 
and $\Pr[S_3^o \not\in \mathcal{N} | S_2^o \neq o, S_1^o \neq x_1,\dots,x_i]$ 
are both equal to $1-O(1/d)$.
Considering the s.r.w starting at the position $S_3^o$, 
it hits at most three neighbours of $o$ in two further steps, the remaining neighbours
will need at least $4$ steps to hit, so, by Lemma \ref{lem:returntoo}, we have
\begin{equation*}
\begin{split}
\sum_{\text{at most $3$ neighbours $x_j$}} \sum_{k\geq 1} P_{2k}(S_3^o,x_j)
 &\leq O(\frac{1}{d}), \\
\sum_{\text{the remaining neighbours $x_{j'}$}} \sum_{k\geq 2} P_{2k}(S_3^o,x_{j'})
 &\leq O(d)O(\frac{1}{d^2}) 
 = O(\frac{1}{d}),
\end{split}
\end{equation*}
since $P_{2k}(x,y) \leq P_{2k}(o,o)$ for all $x, y$. Therefore, combining above
results together, we get
$\Pr[S_n^o \not\in \mathcal{N}\text{ for $n \geq 2 $} | S_1^o \neq x_1,\dots,x_i]
\geq 1 - O(1/d)$ as required. 
\end{proof}

Let us label the neighbours of $o$ different from $x_1, \dots, x_i$ as
$x_{i+1}, \dots, x_{2d}$, in any order.
On the event $A_0$, the first step of $\pi(o)$ is to a neighbour of $o$ in 
$\{x_{i+1},\dots,x_{2d}\}$ and we could assume $x_{2d}$ to be the first step of $\pi(o)$. 
Then $\pi(o)$ does not visit other vertices in $\mathcal{N} \backslash \{o\}$.
Define $A_j =\{S_1^{x_j} = o\}$ for $j = 1,2,\dots,i$ and then $\Pr[A_j] = 1/2d$.

Using Wilson's algorithm, consider random walks first started at $o, x_1, .., x_i$ and 
then started at $x_{i+1},\dots,x_{2d-1}$. We obtain the following:
\begin{equation}
\label{e:E_i}
\begin{split}
\Pr[E_i(x_1,\dots,x_i)] &\geq \Pr[A_0] \times \prod_{j=1}^i \Pr[A_j] 
\times \Pr[E_i(x_1,..,x_i)|A_0 \cap A_1 \cap \dots \cap A_i] \\
&\geq \Big(1-O\Big(\frac{i}{d}\Big)\Big)\Big(\frac{1}{2d}\Big)^i 
\Pr[E_i(x_1,..,x_i)|A_0 \cap A_1 \cap \dots \cap A_i].
\end{split}
\end{equation}
Define $B_k = \{S_1^{x_k} \neq o, S_n^{x_k} \not\in \{x_1,\dots,x_i\} \text{ for } n\geq 2\}$ 
for $k = i+1,\dots,2d-1$.

\begin{lemma}
$\Pr[B_k] \geq 1 - 1/2d - O(i/d^2)$,
where $i+1 \leq k \leq 2d-1$.
\end{lemma}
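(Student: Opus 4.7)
The plan is to bound $\Pr[B_k^c]$ by a straightforward union bound, showing that the dominant contribution is the first step hitting $o$, which accounts for $1/(2d)$, while all other failure modes of $B_k$ contribute only $O(i/d^2)$. Writing
\[
  B_k^c
  = \{S_1^{x_k} = o\} \cup \bigcup_{j=1}^i \{\exists\, n \ge 2:\, S_n^{x_k} = x_j\},
\]
the first event has probability exactly $1/(2d)$, so it remains to show that for each fixed $j \in \{1, \dots, i\}$, one has $\Pr[\exists\, n \ge 2:\, S_n^{x_k} = x_j] = O(1/d^2)$; summing over $j$ will then give the claimed $O(i/d^2)$.

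For a fixed $j$, I would first observe that $x_k$ and $x_j$ lie at graph distance $1$ from $o$ and hence share the parity of $o$, so $S_n^{x_k} = x_j$ is impossible for odd $n$. The estimate then splits into the case $n = 2$ and the case $n \ge 4$ even. For $n = 2$, I would count common neighbours of $x_k$ and $x_j$: these are $o$ and, when $x_k$ and $x_j$ differ in two coordinate directions, the ``corner'' vertex $x_k + (x_j - o)$; there are at most $2$ such vertices, giving $P_2(x_k, x_j) \le 2/(2d)^2 = O(1/d^2)$.

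For $n \ge 4$ even, I would use the Fourier identity
\[
  P_n(x_k, x_j)
  = (2\pi)^{-d} \int_{[-\pi,\pi]^d} \hat{D}(k)^n \cos(k \cdot (x_j - x_k))\, d^d k
  \le (2\pi)^{-d} \int_{[-\pi,\pi]^d} \hat{D}(k)^n\, d^d k
  = P_n(o,o),
\]
where the inequality uses $\hat D(k)^n \ge 0$ for even $n$, and then invoke Lemma \ref{lem:returntoo} to conclude $\sum_{n \ge 4} P_n(o,o) = O(1/d^2)$. Combining this with the $n=2$ bound yields the $O(1/d^2)$ estimate for each $j$, and summing over $j$ completes the proof.

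The argument is essentially routine: the only step requiring a small amount of care is the comparison $P_n(x_k, x_j) \le P_n(o,o)$, which relies on the non-negativity of $\hat D^n$ for even $n$ and is really the same Fourier estimate already used in Lemma \ref{lem:returntoo}. There is no genuine obstacle beyond this; the parity observation and the explicit two-step count are the only ingredients that go beyond Lemma \ref{lem:returntoo}.
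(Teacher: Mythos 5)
Your proof is correct and follows essentially the same route as the paper's: split off the probability $1/(2d)$ that the first step is to $o$, then show that hitting any fixed $x_j$ from $x_k$ costs only $O(1/d^2)$ (a two-step count plus the $n \ge 4$ return estimate of Lemma \ref{lem:returntoo}), and sum over the $i$ targets. Your write-up, via the parity observation, the common-neighbour count, and the comparison $P_n(x_k,x_j) \le P_n(o,o)$, is in fact more explicit than the paper's terse first-step argument, but the underlying estimates are the same.
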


\begin{proof}
We have $\Pr[S_1^{x_k} \neq o] = 1 - 1/2d$.
If the first step is not to $o$, the first step could be in one of the 
$e_1,\dots,e_i$ directions, say $e_j$, with probability $i/2d$. 
Then the probability to hit $x_j$ is $1/2d + O(1/d^2)$. 
Hence,the probability that $S^{x_k}$ hits $\{x_1,\dots,x_i\}$ is $O(i/d^2)$. 
\end{proof}

\begin{lemma}
\label{lem:lowerb}
$q_d(i) \geq e^{-1} \frac{1}{i!}(1+O(\frac{i^2}{d})).$
\end{lemma}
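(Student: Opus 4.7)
The plan is to apply Wilson's algorithm with the explicit ordering: start the first walk at $o$, then at $x_1, \dots, x_i$, then at $x_{i+1}, \dots, x_{2d-1}$ (the vertex $x_{2d}$, to be identified with the first step of $\pi(o)$ under $A_0$, is absorbed for free). The lower bound on $\Pr[E_i(x_1,\dots,x_i)]$ we derive will be uniform in the choice of the $i$-subset $\{x_1,\dots,x_i\}$ of neighbours of $o$, so from \eqref{e:qdEi} we will obtain $q_d(i) \geq \binom{2d}{i}\cdot(\text{bound})$, building on the setup in \eqref{e:E_i}.

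The combinatorial heart is to introduce the strengthened event $B'_k := B_k \cap \{S^{x_k}_n \neq o \text{ for all } n \geq 2\}$ for $k = i+1,\dots,2d-1$, and to verify that on
\[
F := A_0 \cap A_1 \cap \dots \cap A_i \cap B'_{i+1} \cap \dots \cap B'_{2d-1}
\]
the event $E_i(x_1,\dots,x_i)$ is forced. Under $A_0 \cap A_1 \cap \dots \cap A_i$ the first $i+1$ Wilson walks produce a partial tree in which the neighbours of $o$ with path through $o$ are exactly $\{x_1,\dots,x_i\}$. Processing $x_{i+1},\dots,x_{2d-1}$ inductively in $k$: if $B'_k$ holds then the walk from $x_k$ avoids $\{o,x_1,\dots,x_i\}$ entirely, so its loop-erasure $\pi(x_k)$ does not contain $o$, and the set of tree-vertices whose path passes through $o$ remains $\{o,x_1,\dots,x_i\}$; the remaining neighbour $x_{2d}$ sits on $\pi(o)$ under $A_0$ and its tree path avoids $o$ automatically.

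Independence of the underlying simple random walks then gives $\Pr[F] = \Pr[A_0]\prod_{j=1}^{i}\Pr[A_j]\prod_{k=i+1}^{2d-1}\Pr[B'_k]$. The two preceding lemmas supply $\Pr[A_0] \geq 1 - O(i/d)$ and $\Pr[A_j] = 1/(2d)$; for $B'_k$, using Lemma~\ref{lem:returntoo} conditionally on $S^{x_k}_1 \neq o$ (the walker then sits at graph distance $2$ from $o$, and a standard Green's-function estimate shows the subsequent return probability to $o$ is $O(1/d^2)$) yields $\Pr[B'_k] \geq 1 - 1/(2d) - O(i/d^2)$. Multiplying by $\binom{2d}{i}$ and expanding the two factors $\binom{2d}{i}(2d)^{-i} = \frac{1}{i!}(1+O(i^2/d))$ (via $\prod_{j=0}^{i-1}(1-j/(2d))$) and $(1-1/(2d)-O(i/d^2))^{2d-1-i} = e^{-1}(1+O(i/d))$ produces the claimed $q_d(i) \geq \frac{e^{-1}}{i!}(1+O(i^2/d))$. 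The delicate step I expect is the inductive verification of $F \Rightarrow E_i(x_1,\dots,x_i)$, which requires tracking carefully how the set of tree-vertices whose $\pi$ passes through $o$ evolves as each new Wilson walk is incorporated; the subsequent algebraic bookkeeping is routine provided one keeps the dominant $O(i^2/d)$ error, which comes from $\binom{2d}{i}(2d)^{-i}$, rather than something larger.
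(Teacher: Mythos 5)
Your proposal follows essentially the same route as the paper: Wilson's algorithm run in the order $o, x_1,\dots,x_i, x_{i+1},\dots,x_{2d-1}$, the events $A_0$, $A_j$, $B_k$ with the walks' independence factorising the probability, and the same expansion of $\binom{2d}{i}(2d)^{-i}=\frac{1}{i!}(1+O(i^2/d))$ and $(1-\frac{1}{2d}-O(\frac{i}{d^2}))^{2d-1-i}=e^{-1}(1+O(\frac{i}{d}))$. Your one deviation, strengthening $B_k$ to $B'_k$ by also forbidding visits to $o$ at steps $n\ge 2$, is a sensible tightening that makes the containment $F\subset E_i(x_1,\dots,x_i)$ airtight at a cost of only $O(1/d^2)$ per walk, and does not change the final bound.
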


\begin{proof}
By \eqref{e:E_i}, we have 
\begin{equation*}
\Pr[E_i(x_1,\dots,x_i)]
\geq \Big(1- O\Big(\frac{i}{d}\Big)\Big)\Big(\frac{1}{2d}\Big)^i
\Big(1-\frac{1}{2d} + O\Big(\frac{i}{d^2}\Big)\Big)^{2d-1-i}.
\end{equation*}
Then by \eqref{e:qdEi},
\begin{equation*}
\begin{split}
q_d(i)
&\geq {2d \choose i}\Big(1- O\Big(\frac{i}{d}\Big)\Big)\Big(\frac{1}{2d}\Big)^i
\Big(1-\frac{1}{2d}+O\Big(\frac{i}{d^2}\Big)\Big)^{2d-1-i}\\
& =\frac{2d(2d-1)\dots(2d-i+1)}{i!(2d)^i}\Big(1-O\Big(\frac{i}{d}\Big)\Big)
\Big(1-\frac{1}{2d}+O\Big(\frac{i}{d^2}\Big)\Big)^{2d}\Big(1+O\Big(\frac{i}{d}\Big)\Big),
\end{split}
\end{equation*}
where 
\begin{equation*}
\begin{split}
\Big(1 - \frac{1}{2d} + O\Big(\frac{i}{d^2}\Big)\Big)^{2d}
&= \exp\Big(2d\times\log\Big(1 - \frac{1}{2d} + O\Big(\frac{i}{d^2}\Big)\Big)
= \exp\Big(2d\Big(- \frac{1}{2d} + O\Big(\frac{i}{d^2}\Big)\Big) \\
&= \exp\Big(-1 + O\Big(\frac{i}{d}\Big)\Big) 
= e^{-1}\Big(1+O\Big(\frac{i}{d}\Big)\Big),
\end{split}
\end{equation*}
and 
\begin{equation*}
\begin{split}
\frac{2d(2d-1)\dots(2d-i+1)}{(2d)^i} 
= 1\Big(1-\frac{1}{2d}\Big)\Big(1-\frac{2}{2d}\Big)\dots\Big(1-\frac{i}{2d}+\frac{1}{2d}\Big)
= \Big(1 + O\Big(\frac{i^2}{d}\Big)\Big).
\end{split}
\end{equation*}
Then the result follows
\begin{equation*}
q_d(i) 
\geq e^{-1} \frac{1}{i!}\Big(1+O\Big(\frac{i}{d}\Big)\Big)
  \Big(1+O\Big(\frac{i^2}{d}\Big)\Big)
  \Big(1+O\Big(\frac{i}{d}\Big)\Big)
  \Big(1-O\Big(\frac{i}{d}\Big)\Big) 
= e^{-1} \frac{1}{i!}\Big(1+O\Big(\frac{i^2}{d}\Big)\Big).
\end{equation*}
\end{proof}

The above lemma gives a lower bound for $q_d$ and we now prove an upper bound.

\subsection{Upper bounds}
Recall that $\pi(o)$ denotes the unique infinite self-avoiding path in the 
spanning forest starting at $o$ and 
let $\bar{A}_o = \{\text{$\pi(o)$ visits only one neighbour of $o$}\}$.
\begin{lemma}
\label{lem:barAo}
$\Pr[\text{$\pi(o)$ visits more than one neighbour of $o$}] 
= P[\bar{A}_o^c] = O(1/d).$
\end{lemma}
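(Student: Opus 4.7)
The plan is to realise $\pi(o)$ via Wilson's algorithm rooted at infinity as the loop-erasure of a simple random walk $\gamma = (S_n^o)_{n \ge 0}$ started at $o$. Since loop-erasure only deletes vertices, $\pi(o)$ (as a vertex set) is contained in the range of $\gamma$. Consequently $\bar A_o^c$ is contained in the event that $\gamma$ itself visits at least two distinct elements of $\mathcal{N}\setminus\{o\}$, and, since $S_1^o$ is always a neighbour of $o$, it suffices to bound $\Pr[\exists\, n \ge 2 : S_n^o \in \mathcal{N}\setminus\{o, S_1^o\}]$.

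Next I would condition on $S_1^o = x$ and use translation invariance of the walk, reducing the task to estimating $\Pr_x[\exists n \ge 1 : S_n \in \mathcal{N}\setminus\{x\}]$ for a generic neighbour $x$ of $o$. Using the standard transient-walk identity $\Pr_x[\text{visit } y] = G(x,y)/G(y,y)$, together with $G(o,o) = 1 + O(1/d)$ (a direct consequence of Lemma~\ref{lem:returntoo}), the problem reduces to showing
\[
\sum_{\substack{y \sim o \\ y \neq x}} G(x,y) \;=\; O(1/d),
\]
where $G(x,y) = \sum_{n \ge 0} P_n(x,y)$.

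For each such $y$, odd-step transitions $P_n(x,y)$ vanish by parity (both $x$ and $y$ have odd $\ell^1$-parity), so I split $G(x,y) = P_2(x,y) + \sum_{n\ge 4,\, n\text{ even}} P_n(x,y)$. The first term is handled by direct enumeration of length-two paths from $x$ to $y$: there are at most two, giving $P_2(x,y) = O(1/d^2)$. For $n\ge 4$, Fourier inversion as in the proof of Lemma~\ref{lem:returntoo} yields $P_n(x,y) \le P_n(o,o)$, so the tail is bounded by $\sum_{n\ge 4} P_n(o,o) = O(1/d^2)$—precisely the Fourier estimate derived inside the proof of Lemma~\ref{lem:returntoo}. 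Hence $G(x,y) = O(1/d^2)$ uniformly in $y$, and summing over the $2d-1$ neighbours $y \neq x$ produces the desired $O(1/d)$.

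The one place where the argument is genuinely tight is this last summation: the crude bound $P_n(x,y) \le P_n(o,o)$ forfeits a factor of $d$ per term, so the tail $\sum_{n\ge 4} P_n(o,o)$ must be as small as $O(1/d^2)$, not merely $O(1/d)$, in order to survive multiplication by $2d$. This is exactly what the Fourier estimate underpinning Lemma~\ref{lem:returntoo} provides, and no further ingredient is needed.
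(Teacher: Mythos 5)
Your argument is correct, and it reaches the bound by a somewhat different route from the paper. The paper works directly with the loop-erased path: it notes that $\pi(o)$ starts with a step to some neighbour $w$, identifies the dominant way a second neighbour can be hit (a three-step pattern $o\to w\to x\to w'$ with $w'\sim o$), computes its probability as $O(1/d)$, and absorbs all longer excursions into an $O(1/d^2)$ error. You instead never reason about the law of the steps of the LERW at all: you use the containment of $\pi(o)$ in the range of the generating walk, reduce to the event that the plain walk revisits $\mathcal{N}\setminus\{o,S_1^o\}$ at some time $n\ge 2$, and control this by a union bound with Green's function estimates, $G(x,y)=P_2(x,y)+\sum_{n\ge 4}P_n(x,y)=O(1/d^2)$ uniformly over the $2d-1$ relevant targets, the tail coming from the Fourier bound inside Lemma~\ref{lem:returntoo}. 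Your version buys rigour and modularity (everything is a statement about simple random walk, and you correctly note that the $O(1/d^2)$ tail estimate is exactly what survives the factor-$2d$ union bound), at the cost of giving only an upper bound rather than the sharp $\sim 1/(2d)$ asymptotics the paper's computation suggests; since the lemma only asserts $O(1/d)$, that is immaterial. One cosmetic point: in the intermediate reduction you write the target set as $\mathcal{N}\setminus\{x\}$, which still contains $o$, but the sum you actually estimate runs over $y\sim o$, $y\neq x$, which is the correct set (visits to $o$ itself are irrelevant to the event); including $o$ would in any case only add $G(x,o)=O(1/d)$ and not change the conclusion.
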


\begin{proof}
The first step of $\pi(o)$ must visit a neighbour of $o$, denoted by $w$, 
then $P[\bar{A}_o^c]$
\begin{equation*}
\begin{split}
&= \Pr[\text{The second step of $\pi(o)$ visits $x\neq 2w$, 
 the third step visits $w' \sim o$, $w'\neq w$}] + O\Big(\frac{1}{d^2}\Big)\\
&=\Big(\frac{1}{2d}\Big)\Big(\frac{2d-1}{2d}\Big)
 + O\Big(\frac{1}{d^2}\Big)
= O\Big(\frac{1}{d}\Big).\\
\end{split}
\end{equation*} 
\end{proof}

Let $\bar{A}_{all} = \{ \forall w \sim o : \text{either $\pi(w)$ does not visit $o$ 
or $\pi(w)$ visits $o$ at the first step }\}.$

\begin{lemma}
\label{lem:barAall}
$\Pr[\exists w\sim o: \text{$\pi(w)$ visits $o$ but not at the first step}] 
= \Pr[\bar{A}_{all}^c] = O(1/d).$
\end{lemma}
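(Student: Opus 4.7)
The plan is to use a union bound over the $2d$ neighbours $w$ of $o$:
\[
  \Pr[\bar{A}_{all}^c]
  \;\leq\; \sum_{w \sim o} \Pr\bigl[\pi(w) \text{ visits } o,\ \text{the first step of } \pi(w) \text{ is not } o\bigr],
\]
and to show each summand is $O(1/d^2)$; summing over the $2d$ values of $w$ then gives the claimed $O(1/d)$.

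Fix $w \sim o$. I would generate $\pi(w)$ by Wilson's method started at $w$, so that $\pi(w)$ is the chronological loop erasure of an SRW $S^w$ from $w$ to infinity. Two basic facts drive the argument: (i) $\pi(w)$ visits $o$ only if $S^w$ visits $o$; and (ii) the vertex immediately following $w$ in $\pi(w)$ equals $S^w_{\tau_w+1}$, where $\tau_w := \max\{t \geq 0 : S^w_t = w\}$ is the time of the last visit of $S^w$ to $w$, since every excursion of $S^w$ from $w$ back to $w$ is chronologically erased.

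I would then split on the value of $S^w_1$. If $S^w_1 \neq o$, the event forces $S^w$ to first visit $o$ at some time $t \geq 2$. Conditioning on the first step of $S^o$ and using the symmetry of the neighbours of $o$ gives $\Pr[S^w \text{ visits } o] = \Pr[S^o \text{ returns to } o]$ for every $w \sim o$; together with the elementary fact $P_2(o,o) = 1/(2d)$ and Lemma~\ref{lem:returntoo}, this probability equals $1/(2d) + O(1/d^2)$, so subtracting off the time-$1$ contribution $\Pr[S^w_1 = o] = 1/(2d)$ leaves $O(1/d^2)$. If $S^w_1 = o$, then by (ii) the first step of $\pi(w)$ fails to be $o$ only when $\tau_w \geq 1$; conditionally on $S^w_1 = o$ the walk from time $1$ onwards is an SRW from $o$, and the same symmetry argument gives $\Pr[S^o \text{ visits } w] = 1/(2d) + O(1/d^2)$, so multiplying by $\Pr[S^w_1 = o] = 1/(2d)$ yields $O(1/d^2)$.

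Combining the two cases gives $O(1/d^2)$ per neighbour $w$, and summing over the $2d$ neighbours completes the proof. The main subtle point is the chronological-loop-erasure identity in (ii); once it is in hand, all the probability estimates reduce to the return-probability bounds of Lemma~\ref{lem:returntoo} together with the neighbour-symmetry of SRW on $\Z^d$.
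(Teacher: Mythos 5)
Your proof is correct and follows essentially the same route as the paper's: the same union bound over the $2d$ neighbours, the same two-term decomposition (visits of $S^w$ to $o$ not at the first step, plus the erased-loop event $\{S_1^w=o,\ S^w \text{ returns to } w\}$), and the same reduction to the return-probability estimates of Lemma~\ref{lem:returntoo}. Your explicit last-exit characterization of the first step of the loop erasure is just a cleaner justification of the paper's inequality \eqref{e:pi(w)}.
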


\begin{proof}
For a given $w$, $w \sim o$, use Wilson's algorithm with a walk started at $w$. 
Consider that if $S_1^w \neq o$, or $S_1^w = o$ but $S^w$ returns to $w$ subsequently 
and then this loop starting from $w$ in $S^w$ is erased, $\pi(w)$ does not visit $o$ 
at the first step. Hence, we have the inequality:
\begin{equation}
\label{e:pi(w)}
\begin{split}
&\Pr[\text{$\pi(w)$ visits $o$ but not at the first step}] \\
&\leq \Pr[\text{$S^w$ visits $o$ but not at the first step}]
 + \Pr[S_1^w = o, S_n^w = w \text{ for some $n\geq 2$}].
\end{split}
\end{equation}
We bound the two terms as follows.
For the first term, let us append a step from $o$ to $w$ at the beginning
of the walk, and analyze it as if the walk started at $o$. Since
$S_1^o \in \mathcal{N} \backslash \{o\}$, by symmetry, we may assume $S_1^o = w$. 
Then if $S_2^o \neq o$, $S^o$ will need at least $2$ more steps to return to $o$.
 
For the second term in the right hand side of \eqref{e:pi(w)}, we first 
note that we have $\Pr[S_1^w = o, S_2^w = w] = 1/(2d)^2$. If $S^w$ does not return to $w$ 
in the first two steps, $S^w$ will need at least $4$ steps to return to $w$. 
Then, we have that the right hand side of \eqref{e:pi(w)} is 
\begin{equation*}
\begin{split}
&\leq \Pr[\text{$S^o$ returns to $o$ in at least $4$ steps}] 
+ \frac{1}{(2d)^2} + \Pr[\text{$S^w$ returns to $w$ in at least $4$ steps}]\\
&= 2\times\Pr[\text{$S^o$ returns to $o$ in at least $4$ steps}] + O\Big(\frac{1}{d^2}\Big).
\end{split}
\end{equation*}

Therefore, by Lemma \ref{lem:returntoo}, we have the required result
\begin{equation*}
\begin{split}
&\Pr[\exists w\sim o: \text{$\pi(w)$ visits $o$ but not at the first step }]\\
& = 2d\times\Pr[\text{$\pi(w)$ visits $o$ but not at the first step for a fixed $w \sim o$}]
=O\Big(\frac{1}{d}\Big).
\end{split}
\end{equation*}
\end{proof}

Due to Lemmas \ref{lem:barAo} and \ref{lem:barAall}, we have 
\begin{equation*}
q_d(i) 
\leq O\Big(\frac{1}{d}\Big) + \Pr[\bar{A}_o \cap \bar{A}_{all} \cap E_i]
= O\Big(\frac{1}{d}\Big) + \sum_{\substack{x_1,\dots,x_i \sim o \\ \text{distinct}}} 
    \Pr[\bar{A}_o \cap \bar{A}_{all} \cap E_i(x_1,\dots,x_i)]. 
\end{equation*}
Here, 
\begin{equation}
\label{e:AoAallEi}
\begin{split}
&\bar{A}_o \cap \bar{A}_{all} \cap E_i(x_1,\dots,x_i) \\
&\subset \bar{A}_o \cap \bar{A}_{all} \cap \{\text{the first step of $\pi(x_j)$ 
  is to $o$, $j = 1,\dots,i$}\} \cap F_i(x_1,\dots,x_i),
\end{split}
\end{equation}
where 
\begin{equation*}
F_i(x_1,\dots,x_i) 
= \{\text{$\pi(x_j)$ does not go through $o$, $j = i+1, \dots,2d$}\}.
\end{equation*}
The right hand side of \eqref{e:AoAallEi} is contained in the event
\begin{equation*}
\bar{A}_o \cap \{\text{$\pi(o)$ does not visit $x_1,\dots,x_i$}\} 
\cap \bar{A}_{rest} \cap \bigcap_{1\leq j \leq i} H_j \cap F_i(x_1, \dots, x_i),
\end{equation*}
where 
\begin{equation*}
\bar{A}_{rest}
= \{\text{$\pi(x_j)$ goes through at most one $x_{j'}$, 
  $j=i+1,\dots,2d$, $i+1 \leq j' \leq 2d$, $j' \neq j$} \}
\end{equation*}
and $H_j = \{\text{the first step of $\pi(x_j)$ is to $o$}\}$ for $j = 1,\dots,i$.

We denote $\bar{A}_o\cap\{\text{$\pi(o)$ does not visit $x_1,\dots,x_i$}\}$ 
by $\bar{A}_{o,x_1,\dots,x_i}$. Then
\begin{equation*}
\begin{split}
&\Pr\Big[\bar{A}_{o,x_1,\dots,x_i} \cap \bar{A}_{rest} 
  \cap\bigcap_{1\leq j \leq i} H_j \cap F_i(x_1, \dots, x_i)\Big] \\
&=\Pr[\bar{A}_{o,x_1,\dots,x_i}] 
  \prod_{j=1}^i\Pr\Big[H_j\Big|\bigcap_{1\leq j' < j} H_{j'} 
  \cap \bar{A}_{o,x_1,\dots,x_i}\Big] \\
&\quad\times\Pr\Big[F_i(x_1,\dots,x_i)\cap\bar{A}_{rest}
  \Big|\bar{A}_{o,x_1,\dots,x_i}\cap\bigcap_{1\leq j \leq i}H_j\Big].
\end{split}
\end{equation*}

Therefore, we have
\begin{equation}
\label{e:qi}
\begin{split}
q_d(i)
&\leq O\Big(\frac{1}{d}\Big)
+\sum_{\substack{x_1,\dots,x_i\sim o \\ \mathrm{distinct}}}  
 \Big(\prod_{j=1}^i\Pr\Big[H_j
  \Big|\bigcap_{1\leq j' < j}H_{j'}\cap\bar{A}_{o,x_1,\dots,x_i}\Big]\Big) \\
&\quad\quad \times\Pr\Big[F_i(x_1,\dots,x_i)\cap\bar{A}_{rest}
  \Big|\bar{A}_{o,x_1,\dots,x_i}\cap\bigcap_{1\leq j \leq i}H_j\Big].
\end{split}
\end{equation}

\begin{lemma}
\label{lem:Hj}
$\Pr[H_j | \bar{A}_{o,x_1,\dots,x_i} \cap \bigcap_{1\leq j' < j}H_{j'}] 
= 1/2d + O(1/d^2)$, where $j = 1,\dots,i$.
\end{lemma}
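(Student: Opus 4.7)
The plan is to sample the spanning forest by Wilson's algorithm in the order $o, x_1, x_2, \dots$, so that the forest $\cT_{j-1}$, consisting of $\pi(o)\cup\pi(x_1)\cup\dots\cup\pi(x_{j-1})$, is produced before the walk from $x_j$ is run. The conditioning event is measurable with respect to $\cT_{j-1}$: on $\bar{A}_{o,x_1,\dots,x_i}\cap\bigcap_{j'<j}H_{j'}$, the forest $\cT_{j-1}$ contains $\pi(o)$, whose first edge leaves $o$ in a direction outside $\{x_1,\dots,x_i\}$, together with the single-edge attachments $(x_{j'},o)$ for each $j'<j$. Conditional on $\cT_{j-1}$, the walk $S^{x_j}$ is by construction an independent simple random walk run until its first hit $\tau$ of $\cT_{j-1}$, and $H_j$ is the event that the first edge of $LE(S^{x_j})$ is $(x_j,o)$.

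The key fact to invoke is that the first step of a loop-erased walk is the direction taken by the walk at its \emph{last} visit to the starting point. Since $o\in\cT_{j-1}$, the event $\{S^{x_j}_1=o\}$ forces $\tau=1$ and $H_j$ to hold, which gives the lower bound $\Pr[H_j\mid\cT_{j-1}]\ge 1/(2d)$. On the complementary event $\{S^{x_j}_1\ne o\}$, $H_j$ forces the walk to return to $x_j$ at some time $k\ge 2$ before $\tau$, with the following step going to $o$. A union bound and the Markov property give
\begin{equation*}
\Pr[H_j\mid\cT_{j-1}]
\le \frac{1}{2d}+\sum_{k\ge 2}\Pr[S^{x_j}_k=x_j]\cdot\frac{1}{2d}
= \frac{1}{2d}+\frac{1}{2d}\sum_{k\ge 2}P_k(o,o),
\end{equation*}
by translation invariance. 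The tail sum equals $\rho/(1-\rho)$, where $\rho=\Pr[S^o_n=o\text{ for some }n\ge 2]$; by Lemma \ref{lem:returntoo} this is $O(1/d)$, so the correction is $O(1/d^2)$.

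Since both bounds are uniform in the admissible realizations of $\cT_{j-1}$, averaging over the conditional distribution of $\cT_{j-1}$ under the event $\bar{A}_{o,x_1,\dots,x_i}\cap\bigcap_{j'<j}H_{j'}$ yields the claim. The main subtlety will be the justification of the first paragraph: one must check that Wilson's algorithm really does produce $S^{x_j}$ as an unconditioned simple random walk independent of $\cT_{j-1}$, so that conditioning on $\bar{A}_{o,x_1,\dots,x_i}\cap\bigcap_{j'<j}H_{j'}$ factors cleanly through $\sigma(\cT_{j-1})$. This is built into the sequential Markov structure of Wilson's construction, but deserves explicit mention.
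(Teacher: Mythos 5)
Your argument is correct and follows the same route as the paper: run Wilson's algorithm in the order $o, x_1, \dots$, observe that conditionally on $\cT_{j-1}$ the walk from $x_j$ contributes the edge to $o$ with probability $1/(2d)$, and attribute the $O(1/d^2)$ correction to the loop-erasure. The paper's proof is a one-line assertion of exactly this; your version usefully spells out what ``due to the loop-erasure'' means, namely the last-exit description of the first LERW step and the bound $\sum_{k\ge 2}P_k(o,o)=O(1/d)$ from Lemma \ref{lem:returntoo}.
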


\begin{proof}
Given that $\pi(o)$ visits only one neighbour of $o$ which is not in $\{x_1,\dots,x_i\}$ 
and the first steps of $\pi(x_1),\dots,\pi(x_{j-1})$ are all to $o$, the probability 
that $H_j$ happens is $\Pr[S_1^{x_j} = o] = 1/2d$ with the error term of $O(1/d^2)$ 
due to the loop-erasure.
\end{proof}

\begin{lemma}
\begin{equation}
\label{e:FiArest}
\Pr\Big[F_i(x_1,\dots,x_i)\cap \bar{A}_{rest}
 \Big|\bar{A}_{o,x_1,\dots,x_i}\cap\bigcap_{1\leq j \leq i}H_j\Big]
\leq\E\Big[\Big(1-\frac{1}{2d}+O\Big(\frac{1}{d^2}\Big)\Big)^{2d-i-1-N}
 \mathbf{1}_{\bar{A}_{rest}}\Big],
\end{equation}
where 
$N=|\{i+1\leq j\leq 2d-1:\text{$\exists i+1\leq j'<j$ s.t. 
$\pi(x_{j'})$ goes through $x_j$}\}|$.
\end{lemma}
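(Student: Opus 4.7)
The plan is to sample the USF via Wilson's algorithm in the order $o, x_1, \dots, x_i, x_{i+1}, \dots, x_{2d-1}, x_{2d}$. The first $i+1$ walks realise the conditioning events $\bar{A}_{o,x_1,\dots,x_i}\cap\bigcap_{j\le i}H_j$ and produce a partial tree containing $o$, the path $\pi(o)$, and the edges $x_j \to o$ for $j=1,\dots,i$. Writing $\cT_{j-1}$ for the partial tree built just before the $j$-th walk, I would process $j=i+1,\dots,2d-1$ sequentially and control, conditionally on $\cT_{j-1}$, the probability that $\pi(x_j)$ avoids $o$.

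At each step $j$ I would distinguish two cases. If $x_j \in \cT_{j-1}$ then no fresh walk is launched and $\pi(x_j)$ is a sub-path of an earlier $\pi(x_{j'})$ that swept through $x_j$; by the definition of $N$, this trivial case occurs for exactly $N$ of the indices $j\in\{i+1,\dots,2d-1\}$. On the event $F_i\cap\bar{A}_{rest}$ such trivial paths inherit the avoid-$o$ property from their parent $\pi(x_{j'})$, so they contribute no extra factor to the bound. For the remaining $2d-1-i-N$ non-trivial indices, a fresh walk $S^{x_j}$ is launched and terminates upon hitting $\cT_{j-1}$.

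The central claim is the uniform bound
\begin{equation*}
\Pr\bigl[\pi(x_j)\text{ avoids } o \mid \cT_{j-1}\bigr]
\;\le\; 1 - \frac{1}{2d} + O\Bigl(\frac{1}{d^2}\Bigr)
\end{equation*}
for each fresh walk. This is essentially immediate: the walk's first step is to $o$ with probability exactly $1/(2d)$, and since $o\in\cT_{j-1}$ the walk then terminates at $o$, forcing $\pi(x_j)$ through $o$. The $O(1/d^2)$ slack comfortably absorbs higher-order corrections, for example the contribution of walks that return to $o$ after a non-$o$ first step, which is controlled by Lemma~\ref{lem:returntoo}. Iterating the bound over the $2d-1-i-N$ fresh walks via the tower property, multiplying by $\mathbf{1}_{\bar{A}_{rest}}$ (measurable with respect to the completed Wilson output), and taking the overall expectation yields \eqref{e:FiArest}.

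The main technical point is to verify that this bound really is history-independent. Since $o$ lies in every $\cT_{j-1}$ and the simple random walk has a deterministic step distribution, the dominant factor $1/(2d)$ does not depend on the previous outcomes, so the product form propagates cleanly through the sequential conditioning. A secondary subtlety is confirming that trivial indices introduce no extra factor on $F_i\cap\bar{A}_{rest}$; this follows from the observation that their paths are strict sub-paths of already-processed non-trivial ones, so no new randomness intervenes at these steps.
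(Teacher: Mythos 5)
Your proposal is correct and takes essentially the same route as the paper: run Wilson's algorithm from the remaining neighbours in order, extract a factor $1-\frac{1}{2d}+O(1/d^2)$ from each fresh walk (whose first step hits $o\in\cT_{j-1}$ with probability $1/(2d)$, forcing $\pi(x_j)$ through $o$), and note that the $N$ indices already absorbed into the tree contribute no such factor. The paper's own proof is simply a terser statement of this same sequential argument.
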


\begin{proof}
Consider Wilson's algorithm with random walks started at the remaining neighbours 
$x_{i+1}, \dots,x_{2d}$. 
Assume $x_{2d}$ to be the neighbour of $o$ that $\pi(o)$ goes through.
The probability that $\pi(x_k)$ does not go through $o$ is $1 - 1/2d + O(1/d^2)$ 
for $k\in \{i+1,\dots,2d-1\}$. 

If $\pi(x_k)$ visits $x_{k'}$, where $k < k' \leq 2d-1$, 
the probability that $\pi(x_{k'})$ does not go through $o$ is $1$ instead of 
$1-1/2d+O(1/d^2)$, since the LERW from $x_{k'}$ stops immediately 
and $\pi(x_{k'}) \subset \pi(x_k)$, which does not go through $o$.
\end{proof}

\begin{lemma}
\label{lem:Bin}
On the event $\bar{A}_{rest}$, $N \leq B$, 
where $B \sim \mathsf{Binom}(2d-i-1, p)$, $p = 1/2d + O(1/d^2)$.
\end{lemma}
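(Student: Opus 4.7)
The plan is to produce $B$ via a coupling based on Wilson's algorithm. Continuing to use the order $o, x_1, \dots, x_i, x_{i+1}, \dots, x_{2d-1}$, for each $j' \in \{i+1, \dots, 2d-1\}$ define
\begin{equation*}
 \sigma_{j'} = \mathbf{1}\bigl[\pi(x_{j'}) \text{ visits some } x_{j''}
   \text{ with } j'' > j'\bigr].
\end{equation*}
On $\bar{A}_{rest}$ each $\pi(x_{j'})$ visits at most one other $x_{j''}$, so every index $j$ counted by $N$ comes with some $j' < j$ satisfying $\sigma_{j'} = 1$, yielding $N \leq \sum_{j' = i+1}^{2d-1} \sigma_{j'}$.

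The main step is the uniform conditional bound $\Pr[\sigma_{j'} = 1 \mid \mathcal{F}_{j'-1}] \leq p$, where $\mathcal{F}_{j'-1}$ is the $\sigma$-algebra generated by the first $j'-1$ walks together with the initial conditioning on $\bar{A}_{o,x_1,\dots,x_i} \cap \bigcap_j H_j$. Under this conditioning $o \in \cT_{j'-1}$ and none of $x_{i+1}, \dots, x_{2d-1}$ lies in $\cT_i$, so $\sigma_{j'} = 1$ forces $S^{x_{j'}}$ to visit some $x_{j''}$ before hitting $o$, whence
\begin{equation*}
 \Pr[\sigma_{j'} = 1 \mid \mathcal{F}_{j'-1}]
 \leq \sum_{j'' > j'} \Pr\bigl[S^{x_{j'}} \text{ visits } x_{j''}
   \text{ before } o\bigr].
\end{equation*}
For $x_{j''} \neq -x_{j'}$ the dominant contribution is the length-two walk $x_{j'} \to x_{j'}+x_{j''} \to x_{j''}$, worth $1/(4d^2)$ per term; summing over at most $2d-2$ non-antipodal choices yields at most $1/(2d)$. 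The antipodal case and the contribution of walks of length $\geq 4$ are each $O(1/d^3)$ per $j''$ by a Green's function estimate analogous to Lemma~\ref{lem:returntoo}, and therefore sum to $O(1/d^2)$, yielding $p = 1/(2d) + O(1/d^2)$.

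Because this conditional bound is uniform in $\mathcal{F}_{j'-1}$, a standard coupling on an enlarged probability space provides i.i.d.\ $\mathsf{Bernoulli}(p)$ variables $b_{i+1}, \dots, b_{2d-1}$ with $\sigma_{j'} \leq b_{j'}$ almost surely, so that $B = \sum_{j'} b_{j'} \sim \mathsf{Binom}(2d-i-1, p)$ and $N \leq B$ on $\bar{A}_{rest}$. The main obstacle is the middle step: to obtain the factor $1/(2d)$ rather than $1/d$ one must carefully exclude the length-two walks whose first step is $x_{j'} \to o$ (these are absorbed at $o$ and cannot continue to $x_{j''}$), and the tail estimate for walks of length $\geq 4$ requires a bound in the spirit of Lemma~\ref{lem:returntoo} on the Green's function killed at $o$, since the naive inequality $P_n(0, y) \leq P_n(0, 0)$ loses a factor of $d$ when summed over the $O(d)$ targets.
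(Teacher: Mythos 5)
Your proposal is correct and takes essentially the same approach as the paper, whose entire proof is the one-line assertion that there are $2d-i-1$ trials each succeeding with probability at most $1/2d + O(1/d^2)$; you make explicit exactly the sequential coupling and the two-step path count $\,(2d-2)\cdot(2d)^{-2}\le 1/(2d)\,$ that this assertion implicitly relies on. Your closing remark is also well taken: the refinement needed for the length-$\ge 4$ contributions (so that the naive bound $P_n(x,y)\le P_n(o,o)$ does not cost a factor of $d$ over the $O(d)$ targets) is a genuine detail that the paper's one-sentence proof glosses over.
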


\begin{proof}
Since we have $(2d-i-1)$ trials with probability at most $1/2d+O(1/d^2)$.
\end{proof}

Due to Lemma \ref{lem:Bin}, we have that the right hand side of 
\eqref{e:FiArest} is
\begin{equation}
\label{e:FiArest1}
\leq \Big(1-\frac{1}{2d}+O\Big(\frac{1}{d^2}\Big)\Big)^{2d}
 \Big(1+O\Big(\frac{i}{d}\Big)\Big)
 \E\Big[\frac{1}{(1-\frac{1}{2d}+O(\frac{1}{d^2}))^B}\Big],
\end{equation}
where
$\E[z^B] 
= \sum_{j=0}^{2d-i-1} z^j {2d-i-1 \choose j}p^j(1-p)^{2d-i-1-j} 
= (1-p-zp)^{2d-i-1}$.

Hence \eqref{e:FiArest1} is
\begin{equation}
\label{e:FiArest2}
\begin{split}
&\leq e^{-1}\Big(1+O\Big(\frac{1}{d}\Big)\Big)\Big(1+O\Big(\frac{i}{d}\Big)\Big)
 \Big(1-\frac{1}{2d} + O\Big(\frac{1}{d^2}\Big) + \frac{\frac{1}{2d} 
 + O(\frac{1}{d^2})}{1-\frac{1}{2d} 
 + O(\frac{1}{d^2})}\Big)^{2d-i-1}\\
&= e^{-1}\Big(1+O\Big(\frac{1}{d}\Big)\Big)
   \Big(1+O\Big(\frac{i}{d}\Big)\Big)\Big(1+O\Big(\frac{1}{d^2}\Big)\Big)^{2d-i-1}
= e^{-1}\Big(1+O\Big(\frac{i}{d}\Big)\Big).
\end{split}
\end{equation}

\begin{lemma}
\label{lem:upperb}
$q_d(i) \leq O(\frac{1}{d}) + e^{-1}\frac{1}{i!}(1+O(\frac{i}{d})).$
\end{lemma}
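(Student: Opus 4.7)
My plan is to read off the upper bound directly from inequality \eqref{e:qi}, inserting the estimates supplied by Lemma \ref{lem:Hj} and by the chain of inequalities \eqref{e:FiArest}--\eqref{e:FiArest2}, and then summing over the $i$-tuples of neighbours of $o$.

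First, I would apply Lemma \ref{lem:Hj} to each factor in the product over $j = 1, \dots, i$ inside \eqref{e:qi}, which gives
\begin{equation*}
 \prod_{j=1}^i \Pr\Big[H_j \Big| \bigcap_{1 \le j' < j} H_{j'} \cap \bar{A}_{o,x_1,\dots,x_i}\Big]
 = \Big(\frac{1}{2d} + O\Big(\frac{1}{d^2}\Big)\Big)^i
 = \Big(\frac{1}{2d}\Big)^i \Big(1 + O\Big(\frac{i}{d}\Big)\Big).
\end{equation*}
Next, I would invoke the estimate \eqref{e:FiArest2} established above for the last factor in \eqref{e:qi}, which yields
\begin{equation*}
 \Pr\Big[F_i(x_1,\dots,x_i) \cap \bar{A}_{rest} \Big| \bar{A}_{o,x_1,\dots,x_i} \cap \bigcap_{1\le j \le i} H_j\Big]
 \le e^{-1}\Big(1 + O\Big(\frac{i}{d}\Big)\Big),
\end{equation*}
uniformly in the choice of the distinct neighbours $x_1, \dots, x_i$.

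Then I would carry out the sum over distinct $i$-tuples $(x_1, \dots, x_i)$ of neighbours of $o$. Since the events $E_i(x_1, \dots, x_i)$ indexed by unordered $i$-subsets are disjoint, the sum contains $\binom{2d}{i}$ terms, and using the trivial inequality
\begin{equation*}
 \binom{2d}{i} \Big(\frac{1}{2d}\Big)^i
 = \frac{(2d)(2d-1) \cdots (2d-i+1)}{i!\,(2d)^i}
 \le \frac{1}{i!},
\end{equation*}
the sum is at most $\frac{1}{i!}(1 + O(i/d)) \cdot e^{-1}(1 + O(i/d))$. Combining these pieces with the $O(1/d)$ additive error term from \eqref{e:qi} gives the claimed bound $q_d(i) \le O(1/d) + e^{-1}\frac{1}{i!}(1 + O(i/d))$.

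I do not anticipate a genuine obstacle, since all the conditional probability bounds have already been established in the previous lemmas; the only point to be slightly careful about is using the crude bound $\binom{2d}{i}(2d)^{-i} \le 1/i!$ rather than an asymptotic expansion, which avoids having to produce an $O(i^2/d)$ error term on the combinatorial factor and keeps the stated error $O(i/d)$.
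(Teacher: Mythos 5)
Your proposal is correct and follows essentially the same route as the paper: both plug Lemma \ref{lem:Hj} and the bound \eqref{e:FiArest2} into \eqref{e:qi}, sum over the $\binom{2d}{i}$ choices of neighbours, and use $\binom{2d}{i}(2d)^{-i} \le 1/i!$ together with $(1+O(1/d))^i = 1+O(i/d)$ to arrive at the stated bound. Your remark about using the crude inequality on the combinatorial factor to keep the error at $O(i/d)$ is exactly what the paper does in its final step.
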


\begin{proof}
Due to Lemma \ref{lem:Hj}, \eqref{e:qi} and \eqref{e:FiArest2}, 
we have
\begin{equation*}
\begin{split}
q_d(i)
&\leq O\Big(\frac{1}{d}\Big)
+ {2d\choose i}\Big(\frac{1}{2d}+O\Big(\frac{1}{d^2}\Big)\Big)^i
  e^{-1}\Big(1+O\Big(\frac{i}{d}\Big)\Big)\\
& = O\Big(\frac{1}{d}\Big)
   + e^{-1}\frac{2d(2d-1)\dots(2d-i+1)}{i!}
   \Big(\frac{1}{2d}\Big)^i\Big(1+O\Big(\frac{1}{d}\Big)\Big)^i
   \Big(1+O\Big(\frac{i}{d}\Big)\Big)\\
&\leq O\Big(\frac{1}{d}\Big) 
   + e^{-1}\frac{1}{i!}\Big(1+O\Big(\frac{i}{d}\Big)\Big).
\end{split}
\end{equation*}
\end{proof}

\begin{lemma}
\label{lem:pio}
For $k = 1, \dots, 3$ and distinct $w_1, \dots, w_k \sim o$, we have
\begin{equation*}
  \Pr [ \text{$\pi(w_i)$ passes through $o$ for $i = 1,\dots,k$} ]
  = \left( \frac{1}{2d} \right)^k + O (d^{-k-1}). 
\end{equation*}
\end{lemma}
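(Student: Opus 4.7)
The plan is to use Wilson's algorithm with walks issued in the order $o, w_1, \dots, w_k$, and to analyse $A_i := \{\pi(w_i) \ni o\}$ by isolating the sub-event $F_i \subset A_i$ that $\pi(w_i)$ leaves $w_i$ along the direct edge $w_i o$. Decomposing $\bigcap_i A_i = \bigcap_i F_i \sqcup (\bigcap_i A_i \cap \bigcup_j F_j^c)$ splits the proof into a main term and an error term.

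The core of the argument is the pair of estimates, holding for the partial forest $\cT_{i-1}$ generated before the walk from $w_i$,
\begin{equation*}
\Pr[F_i \mid \cT_{i-1}] = \frac{1}{2d} + O(d^{-2}), \qquad \Pr[A_i \mid \cT_{i-1}] = \frac{1}{2d} + O(d^{-2})
\end{equation*}
(the second holding uniformly in typical $\cT_{i-1}$). For the first, the event $\{S_1^{w_i} = o\}$ (probability $1/(2d)$) immediately forces $F_i$ since $o \in \cT_{i-1}$; the residual contribution demands that the walk return to $w_i$ before hitting the tree and then exit to $o$ at its last visit, which by the strong Markov property and Lemma \ref{lem:returntoo} is at most $\sum_{n \ge 1} p_n(w_i, w_i) / (2d) = (G(o,o)-1)/(2d) = O(d^{-2})$. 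For the second, the same $1/(2d)$ leading term applies, while $A_i \cap F_i^c$ requires the walk from $w_i$ to reach the ``pre-$o$'' portion of $\cT_{i-1}$ from a first-step position $v \ne o$ at graph distance $2$ from $o$; each pre-$o$ vertex contributes $O(d^{-2})$ via the Green's-function bound $G(u,v) = O(d^{-2})$ for distance-$2$ pairs, and in the dominant regime there are only $O(1)$ such vertices, namely $\{o\} \cup \{w_{i'} : i' < i\}$.

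Multiplying the first estimate across $i$ gives $\Pr[\bigcap_i F_i] = (1/(2d))^k + O(d^{-k-1})$. Subtracting the two estimates gives $\Pr[A_j \cap F_j^c \mid \cT] = O(d^{-2})$, and so by a union bound together with the inductive identity $\Pr[\bigcap_{i \ne j} A_i] = (1/(2d))^{k-1} + O(d^{-k})$ (base case $k=1$ covered directly by the second estimate),
\begin{equation*}
\Pr\Big[\bigcap_i A_i \cap \bigcup_j F_j^c\Big] \le \sum_{j=1}^{k} \Pr\Big[\bigcap_{i \ne j} A_i\Big] \cdot O(d^{-2}) = O(d^{-k-1}),
\end{equation*}
which combines with the main term to give the claim.

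The main obstacle will be making the $A_i$ estimate robust to non-dominant partial trees, i.e.\ those in which some $\pi(w_{i'})$ takes a detour rather than the direct edge $w_{i'} o$, because these add extra pre-$o$ candidate vertices for the walk from $w_i$ to hit. The saving observations are that each detour vertex lies at graph distance $\ge 2$ from $w_i$ and therefore contributes $O(d^{-2})$ per vertex via the Green's-function bound, and that the very appearance of a detour carries an extra probability weight $O(d^{-1})$ from the loop-erasure that produced it. For $k \le 3$ these two effects combine so that the expected contribution of atypical trees closes comfortably inside the $O(d^{-k-1})$ budget.
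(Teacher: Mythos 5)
Your proposal is correct and follows the same route the paper intends: the paper gives no written proof of this lemma beyond a pointer to the argument for Lemma \ref{lem:Hj}, namely running Wilson's algorithm from $o$ and then $w_1,\dots,w_k$, so that each event $\{o\in\pi(w_i)\}$ is dominated by the first-step-to-$o$ event of probability $1/(2d)$ with $O(d^{-2})$ corrections from loop-erasure and from hitting pre-$o$ vertices of the partial tree --- which is exactly your $F_i$ versus $A_i\setminus F_i$ decomposition made explicit. Your sketch supplies considerably more detail than the paper does and the error accounting closes as you claim; the only imprecision is the assertion that every detour vertex lies at distance $\ge 2$ from $w_i$ (the vertex $w_{i'}+w_i-o$ is a counterexample), but, as you yourself note, the extra $O(d^{-1})$ cost of producing such a detour absorbs the correspondingly weaker $O(d^{-1})$ hitting estimate, so the bound still lands inside the $O(d^{-k-1})$ budget.
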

This lemma can be proved using ideas used to prove Lemma \ref{lem:Hj}.

\subsection{Proof of the asymptotic formula}

\begin{proof}[Proof of Theorem \ref{thm:asymformula}]
We first prove part (i).
By Wilson's algorithm, 
\begin{equation*}
p_d(i) = \sum_{j=0}^{i} \frac{q_d(j)}{2d-j}.
\end{equation*}
Due to Lemmas \ref{lem:lowerb} and \ref{lem:upperb} , we have
\begin{equation}
\label{e:pilower}
p_d(i)
\geq\sum_{j=0}^{i} \frac{e^{-1}\frac{1}{j!}(1 + O(\frac{j^2}{d}))}{2d-j}
=\sum_{j=0}^{i}\frac{e^{-1}\frac{1}{j!}}{2d-j} 
 +\sum_{j=0}^{i} \frac{\frac{1}{j!} O(\frac{j^2}{d})}{2d-j},
\end{equation}
and 
\begin{equation}
\label{e:piupper}
p_d(i)
\leq\sum_{j=0}^{i}\frac{O(\frac{1}{d}) + e^{-1}\frac{1}{j!}(1+O(\frac{j}{d}))}{2d-j} 
=\sum_{j=0}^{i}\frac{e^{-1}\frac{1}{j!}}{2d-j} 
+\sum_{j=0}^{i}\frac{O(\frac{1}{d})+\frac{1}{j!}O(\frac{j}{d})}{2d-j}.
\end{equation}
Here, using that $0\le j\le d^{1/2}$, we have
\begin{equation*}
\sum_{j=0}^{i} \frac{\frac{1}{j!} O(\frac{j^2}{d})}{2d-j}
\leq \frac{1}{2d-d^{1/2}}O(\frac{1}{d})\sum_{j=0}^{i}\frac{j^2}{j!}
=O(\frac{1}{d^2}).
\end{equation*}
Similarly,
\begin{equation*}
\begin{split}
\sum_{j=0}^{i} \frac{O(\frac{1}{d}) + \frac{1}{j!} O(\frac{j}{d})}{2d-j}
\le \sum_{j=0}^i O(d^{-2}) + \sum_{j=0}^i \frac{j}{j!} O(d^{-2})
= O( i/d^2 ).
\end{split}
\end{equation*}
Putting these error bounds together with \eqref{e:pilower} 
and \eqref{e:piupper}, we prove statement (i) of the theorem.

Let us now use that 
\begin{equation*}
\frac{1}{2d}e^{-1}\sum_{j=0}^{i} \frac{1}{j!} 
\leq \sum_{j=0}^{i} \frac{e^{-1} \frac{1}{j!}}{2d-j} 
\leq \frac{1}{2d-i}e^{-1}\sum_{j=0}^{i}\frac{1}{j!}.
\end{equation*} 
When $i \le d^{1/2}$, and $i, d \rightarrow \infty $,
we have $\frac{1}{2d-i} \sim \frac{1}{2d}$  
and $\sum_{j=0}^{i}\frac{1}{j!} \rightarrow e$.
Hence,
\begin{equation*}
\sum_{j=0}^{i} \frac{e^{-1} \frac{1}{j!}}{2d-j} \sim \frac{1}{2d},
\quad\text{as $i, d \rightarrow \infty$}.
\end{equation*}

We are left to prove statement (ii).
The uniform distribution for $d^{1/2} \le i \le 2d-1$ can be obtained from the 
monotonicity:
\begin{equation*}
p_d(d^{1/2}) \le p_d(i) \le p_d(2d-1), \quad d^{1/2} \le i \le 2d-1, 
\end{equation*}
if we show that $p_d(2d-1) = p_d(d^{1/2}) + O(d^{-3/2})$.

We write 
\begin{equation}
\label{e:pd2d-1}
p_d(2d-1)
= \sum_{j=0}^{2d-1} \frac{q_d(j)}{2d-j}
= p_d(d^{1/2}) + \sum_{j = d^{1/2}}^{2d-1} \frac{q_d(j)}{2d-j}
\le p_d(d^{1/2}) + \sum_{j = d^{1/2}}^{2d-1} q_d(j).
\end{equation}

Introducing the random variable
\begin{equation*}
X:= |\left\{ w \sim o : o \in \pi(w) \right\}|,
\end{equation*}
the last expression in \eqref{e:pd2d-1} equals
\begin{equation*}
p_d(d^{1/2}) + \Pr [ X \ge d^{1/2} ]
\le p_d(d^{1/2}) + \Pr [ X^3 \ge d^{3/2} ]
\le p_d(d^{1/2}) + \frac{\E [ X^3 ]}{d^{3/2}}. 
\end{equation*}
Therefore, it remains to show that $\E[ X^3 ] = O(1)$.
This follows from Lemma \ref{lem:pio}, by summing over $w_1, \dots, w_3$ 
(not necessarily distinct). The cases $k=1,2$ of the lemma are used to 
sum the contributions where one or more of the $w_i$'s coincide.
\end{proof}

\bibliographystyle{plain}
\bibliography{height_probability_revision}

\end{document}